\documentclass{amsart}
\usepackage{hyperref}
\usepackage{graphicx}
\usepackage{amscd}
\usepackage{verbatim}
\usepackage[dvipsnames]{xcolor}
\usepackage{tikz}
\usetikzlibrary{arrows.meta}
\usetikzlibrary{bending}
\usepackage{bm}
\usepackage{asymptote}
\newtheorem{theorem}{Theorem}[section]
\newtheorem{lemma}[theorem]{Lemma}

\theoremstyle{definition}

\newtheorem{proposition}[theorem]{Proposition}

\newtheorem{corollary}[theorem]{Corollary}

\theoremstyle{remark}
\newtheorem{remark}[theorem]{Remark}

\numberwithin{equation}{section}



\begin{document}

\title{On the monodromy of elliptic surfaces}

\author{Genival Da Silva Jr.}

\address{\newline Department of Mathematics \newline Imperial College London\newline London, SW7 2AZ, UK}
\email{jrriemann@gmail.com}

\keywords{Monodromy, Elliptic surfaces, Exceptional Lie groups}

\begin{abstract}
There have been several constructions of family of varieties with exceptional monodromy group \cite{DR},\cite{Y}. In most cases, these constructions give Hodge structures with high weight(Hodge numbers spread out). N. Katz was the first to obtain Hodge structures with low weight(Hodge numbers equal to $(2,3,2)$) and geometric monodromy group $G_2$. In this article I will give an explicit proof of Katz's result by finding all the monodromies in a given basis.
\end{abstract}

\maketitle

\section{Introduction}
In \cite{KATZ}, Nicholas Katz studies the appearance of $G_2$ as the monodromy group of a family of elliptic surfaces. Starting with an elliptic curve $E/k$ and a `seven point sheaf' on $E$ (geometrically irreducible lisse sheaf $\mathcal{F}$ of rank 2 on a dense open set $j:U\subset E$), he wonders what are the groups $G_{geom,N},G_{arith,N}$, where $N=j_{*}\mathcal{F}(1/2)[1]$.

One way to obtain such $\mathcal{F}$ is to consider $E$ as a double covering $x:E\rightarrow \mathbb{P}^1$ of $\mathbb{P}^1$. Then our $\mathcal{F}$ is obtained by pulling back a `four point sheaf' $\mathcal{G}$ on $\mathbb{P}^1$, and we can get such $\mathcal{G}$ by taking $R^1\pi_{*}\overline{\mathbb{Q}_l}(1/2)$ for an elliptic surface $\pi: \mathcal{E}\rightarrow \mathbb{P}^1$.

Over $\mathbb{C}$, up to isogeny, there are only 4 elliptic surfaces $\pi: \mathcal{E}\rightarrow \mathbb{P}^1$, they are:
\begin{equation}\label{families}
\begin{split}
    & y^2 = -x(x-1)(x-\lambda^2),\; \lambda \neq 0,\pm 1, \infty\\
    & y^2 = 4x^3 + ((\lambda + 2)x + \lambda)^2, \; \lambda \neq 0,1,-8,\infty\\
    & y^2 = 4x^3 + (\lambda^2+6\lambda -11)x^2 + (10 - 10\lambda)x + 4\lambda -3, \; \lambda \neq 0,\infty \text{ and } \lambda^2+11\lambda-1 \neq 0\\
    & y^2 = 4x^3 + (3\lambda x + 1)^2,  \; \lambda \neq \infty,\lambda^3 \neq 1
\end{split}
\end{equation}

For each one these 4 families, we can associate a monic cubic polynomial $f$, whose roots are the `bad' values of $\lambda$ described above. Moreover:

\begin{theorem}\cite[theorem 4.1]{KATZ}
For each one of the four families above, there is an explicit nonzero integer polynomial $P[T]\in\mathbb{Z}[T]$ with the following property. For each finite field $k$ in which $l$ is invertible, and for each $t\in k$ at which $P(t)\neq 0$ in $k$, the equation $$E_t : y^2=tf(x) + t^2$$ defines an elliptic curve over $k$, and the $N$ gotten by pulling back $\mathcal{G}(x)$ has $$G_{geom,N}=G_{arith,N}=G_2$$.
\end{theorem}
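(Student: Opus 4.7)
The plan is to treat each of the four families in (\ref{families}) separately and, for each, to make explicit the monodromy representation attached to $N = j_{*}\mathcal{F}(1/2)[1]$. First I would fix a family, identify the monic cubic $f$ whose roots together with $\infty$ give the four bad values of $\lambda$, and use the Kodaira classification of singular fibres to read off the local monodromy of $\mathcal{G} = R^1\pi_{*}\overline{\mathbb{Q}_l}(1/2)$ at each of these four points as explicit $2\times 2$ matrices in $\mathrm{SL}_2$. These local matrices $\gamma_1,\gamma_2,\gamma_3,\gamma_\infty$ are constrained by the single global relation $\gamma_\infty\gamma_3\gamma_2\gamma_1 = 1$ in the fundamental group of $\mathbb{P}^1\setminus\{\lambda_1,\lambda_2,\lambda_3,\infty\}$.

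Second, I would analyse the double cover $x\colon E_t\to\mathbb{P}^1$ defined by $y^2=tf(x)+t^2$. For $t\neq 0$ the three roots of $f$ are unramified values of $x$, contributing two preimages each, while $\infty$ is a branch point contributing a single preimage; this produces exactly the seven singular points of $\mathcal{F} := x^{*}\mathcal{G}$ on $E_t$. The local monodromies of $\mathcal{F}$ are then a conjugate of $\gamma_i$ at each of the six unramified preimages and the square $\gamma_\infty^2$ at the preimage of $\infty$. After middle-extension and Tate twist, these seven local data together with a base point determine $N$ on $E_t$ up to isomorphism.

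Third, and most substantively, I would pin down $G_{geom,N}$ via Katz's Tannakian formalism of convolution of perverse sheaves on the elliptic curve $E_t$, which realises $G_{geom,N}$ as a subgroup of $\mathrm{GL}(V)$ on a canonical $7$-dimensional representation $V$ (reflecting the Hodge numbers $(2,3,2)$). I would exhibit a non-degenerate alternating trilinear form on $V$ preserved by $G_{geom,N}$, forcing the inclusion $G_{geom,N}\subseteq G_2\subset\mathrm{SO}_7$, and then use the explicit matrices computed above to evaluate enough traces and commutators to rule out every proper reductive subgroup of $G_2$—in particular $\mathrm{SL}_3$, the maximal parabolics, and the irreducible finite subgroups—by comparison of characteristic polynomials.

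The main obstacle, as I anticipate, is exactly this last containment-is-equality step: one must exhibit enough variation in the explicit monodromy matrices, for instance two non-commuting unipotent elements whose Jordan types are compatible with $\mathfrak{g}_2$ but generate no proper Lie subalgebra, to force the Zariski closure to be all of $G_2$. Once geometric fullness is established, $G_{arith,N}$ is sandwiched between $G_2$ and its normaliser in $\mathrm{GL}_7$, which equals $G_2$ itself, so $G_{arith,N}=G_2$ follows automatically. Finally, the polynomial $P[T]\in\mathbb{Z}[T]$ is assembled as the product of the polynomial conditions that must be avoided for the construction to be non-degenerate: the discriminant of $f$, the resultants guaranteeing that the seven preimages under $x$ remain distinct as $t$ varies, and the conditions ensuring that the local monodromy types at these preimages are the generic ones read off in step one.
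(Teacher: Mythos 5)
The paper does not supply a proof of this statement: it is Katz's Theorem 4.1, quoted verbatim and cited. What the paper actually proves (Theorem 1.2 and Sections 2--5) is an alternative proof of Katz's Theorem 5.1, namely that the variation $\mathcal{H}=R^1\rho_{*}(\mathcal{G}(1/2)\otimes\mathcal{L}_{\chi_2(tf(x)+t^2)})$ over the punctured $t$-line has geometric monodromy $G_2$ for the first three families. The paper's method is entirely unlike what you propose: it works topologically over $\mathbb{C}$, builds an explicit seven-dimensional basis of transcendental $2$-cycles on the surfaces $X_t$, computes each monodromy matrix by Picard--Lefschetz, and then recognises the Lie algebra generated by the log-monodromies as $\mathfrak{g}_2$ by exhibiting a Cartan subalgebra with the correct root multiplicities $(1,4,4,4,1)$.

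Your proposal instead reconstructs the outline of Katz's original $\ell$-adic argument (Tannakian convolution on $E_t$, a trilinear invariant forcing $G_{geom,N}\subseteq G_2$, elimination of proper subgroups). That is the right circle of ideas for Theorem 4.1, but several load-bearing claims are asserted rather than argued. You take it for granted that the Tannakian representation $V$ is $7$-dimensional --- in Katz's setting this is an Euler--Poincar\'e/generic-rank computation --- and that a nondegenerate alternating trilinear invariant exists on $V$; the latter is itself a theorem in Katz's paper, obtained through autoduality and moment computations, not a transparent structural observation. The passage from $G_{geom,N}=G_2$ to $G_{arith,N}=G_2$ is also not a bare normalizer computation in $\mathrm{GL}_7$, since that normalizer contains the scalar torus; one must invoke the orthogonal self-duality of the weight-zero object $N$ to cut $G_{arith}$ down. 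Finally, the paper flags a subtlety your sketch does not address: Theorem 4.1 holds for all four families, yet the derived local system $\mathcal{H}$ fails to have $G_2$ monodromy for the fourth; any proof of Theorem 4.1 that passes through $\mathcal{H}$, as Katz's does and as you suggest, must explain precisely where and why the fourth family parts ways, and your proposal leaves that boundary undrawn.
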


The proof of the theorem above involves the analysis of the sheaf $$\mathcal{H}:= R^1\rho_{*}(\mathcal{G}(1/2)\otimes \mathcal{L}_{\chi_2(tf(x) + t^2)})$$ where $\rho:\mathbb{P}^1\rightarrow S$, $S$ a punctured affine $t$ line, $\mathcal{L}_{\chi_2}$ is the Kummer sheaf attached to the character $\chi_2$ of $k^{\times}$. Katz\cite[theorem 5.1]{KATZ} then proves that for the first three (but not the fourth) families above the sheaf $\mathcal{H}$ has $G_{geom}=G_{arith}=G_2$. In the rest of this paper I will give an alternative proof of the latter fact by explicitly finding the monodromies matrices.

\begin{theorem}
The geometric monodromy group of the first three families considered above is $G_2$. 
\end{theorem}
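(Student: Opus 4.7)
The strategy is to write down the geometric monodromy representation of $\mathcal{H}$ by giving, in a single basis of the generic fibre $\mathcal{H}_{t_0}$, explicit $7\times 7$ matrices for a set of generators of $\pi_1(S,t_0)$, and then to recognise $G_2$ directly from these matrices. For each of the three families I would first determine the finite set $\Sigma\subset\mathbb{P}^1$ of $t$-values at which $\mathcal{H}$ has nontrivial local monodromy: these consist of the zeros of the polynomial $P$ of Theorem 1.1, the point $t=0$ where the Kummer character $\chi_2(tf(x)+t^2)$ degenerates, and $t=\infty$. Fixing a base point $t_0\in S$, a standard Hurwitz system of loops encircling the points of $\Sigma$ generates $\pi_1(S,t_0)$, and the problem reduces to computing the image of each such loop.

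For each generator, the local monodromy is read off from the degeneration of the triple (elliptic curve $E_t$, pull-back of $\mathcal{G}$ along $x:E_t\to\mathbb{P}^1$, Kummer twist) as $t$ tends to the corresponding point of $\Sigma$. At the zeros of $P$, $E_t$ acquires a node and the Picard--Lefschetz formula yields a unipotent pseudo-reflection on the $7$-dimensional fibre; at $t=0$ and $t=\infty$, the Kummer character contributes a quasi-unipotent factor of order $2$, and the contribution of $\mathcal{G}$ enters through its own Kodaira-type data at the images under $x$ of the branch points. A basis of $\mathcal{H}_{t_0}$ provided by vanishing cycles allows each local monodromy to be written down as a concrete $7\times 7$ matrix.

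With the matrices in hand, I would verify the inclusion into $G_2\subset SO_7$ by exhibiting its two defining invariants: the cup-product pairing $Q$ on $\mathcal{H}_{t_0}$ (automatically monodromy-invariant) and a generic alternating trilinear form $\omega\in\wedge^3\mathcal{H}_{t_0}^*$ whose stabiliser in $SO(Q)$ is $G_2$. Writing down such an $\omega$ explicitly in the vanishing-cycle basis and checking generator-by-generator that $\omega$ is preserved places the monodromy group inside a conjugate of $G_2$. To upgrade this containment to an equality I would use irreducibility of the $7$-dimensional representation, which together with the presence of a unipotent pseudo-reflection rules out every proper closed connected subgroup of $G_2$ (the relevant candidates, up to conjugacy, being $SL_3$, $(SL_2\times SL_2)/\mu_2$, and the two maximal parabolics).

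The principal technical difficulty is the explicit monodromy calculation: for the second and third families the equations for $E_t$ and their interaction with the Kummer twist are less symmetric than for the first (Legendre) family, and a careful, consistent choice of vanishing-cycle basis across all the generators is required before the $G_2$-invariance of $\omega$ can be read off by a finite matrix computation. Once that uniform basis is fixed, the verification of $G_2$-membership and of Zariski-density is essentially mechanical.
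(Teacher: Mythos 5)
Your overall plan — fix a basis of the rank-seven local system, compute explicit $7\times 7$ matrices for a generating set of loops, then recognise $G_2$ from the matrices — agrees with the paper's at the level of strategy, and in particular the local-monodromy ingredients (nodal degenerations of $E_t$ handled by Picard--Lefschetz, with special behaviour at $t=0$ and $t=\infty$) are the same. The method of recognition, however, is genuinely different. You propose to show containment in $G_2$ by producing an explicit invariant $3$-form $\omega\in\wedge^3\mathcal{H}_{t_0}^*$ and then to force equality by classifying proper closed connected subgroups of $G_2$. The paper never writes down $\omega$; instead it takes logarithms of the monodromies, exhibits fourteen linearly independent elements of the Lie algebra they generate, produces two commuting diagonalisable elements $t_1,t_2$, and checks that $\mathrm{ad}(t_1),\mathrm{ad}(t_2)$ reproduce the $G_2$ root pattern $(1,4,4,4,1)$ on eigenspaces. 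The Lie-algebra route has the advantage of being a single finite computation with no need to search for an invariant tensor or to invoke a subgroup classification, while your route is closer in spirit to Katz's original argument and would, if carried through, give the $G_2$-structure more explicitly.

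Two caveats about your proposal. First, the step ``irreducibility together with a unipotent pseudo-reflection rules out every proper closed connected subgroup'' needs justification in this case: inspecting the paper's matrices, none of $M_+$, $M_-$, $M_0$, $M_\infty$ is a pseudo-reflection in the usual sense (e.g.\ $M_\pm - I$ has rank $2$, and $M_0$ is semisimple of order dividing a small number, not a reflection). You would need either a different distinguished element (such as a unipotent with a single Jordan block, as occurs at $\infty$ after passing to $M_\infty^3$) or a different elimination argument to rule out the principal $SL_2$ and the other maximal subgroups. Second, you substantially underestimate the computation of the local monodromy at $t=0$: there the base cycles $\alpha$, $\beta$, $\gamma_i$ all cross the branch cuts trivialising the fibrewise local system, so the $2$-cycles change both in the base direction and in the fibre direction simultaneously, and the bulk of the paper's work (several figures and ad hoc reasoning, plus imposing $Q$-orthogonality to pin down $M_0$ on the $\widetilde{C}_i$) goes into exactly this. ``Reading off'' $M_0$ from Kodaira-type local data at the branch points, as you suggest, will not suffice without this geometric bookkeeping.
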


\subsection*{Acknowledgements} I thank my PhD advisor Matt Kerr for sharing his ideas with me, and also acknowledge the travel support from NSF FRG Grant 1361147, the support form CNPq program \textit{Science without borders} and the support from ERC Grant 682603.

\section{The explicit construction of the family}
For the sake of simplicity, I will work in this section with the first of the 3 families with monodromy group $G_2$, but the exact same approach applies to the remaining ones. We start off, by constructing the family of surfaces mentioned above.

Let $\mathcal{E}\rightarrow\mathbb{P}^1:y^2=x(x-1)(x-z^2)$ be a rational elliptic surface with singular fibers at $z=-1,0,1,\infty$. For $t\neq 0,\pm \frac{2}{3\sqrt{3}} , \infty$, take a base change by:
\begin{equation}
    E_t\rightarrow \mathbb{P}^1:w^2=tz(z-1)(z+1)+t^2
\end{equation}
The result is a family of elliptic surfaces $X_t\rightarrow E_t$ with 7 singular fibers on each surface, as described below:
\begin{equation}
    \begin{array}{ccc}
        \mathcal{X} & \hookleftarrow & X_t\\
        \downarrow \pi &  & \downarrow \pi_t\\
        \mathcal{E} & \hookleftarrow & E_t\\
        \downarrow &  & \downarrow\\
        \mathbb{P}^1 & \hookleftarrow & \{ t \}
    \end{array}
\end{equation}

\begin{proposition}\label{trd}
For each $X_t$ we have $dim(H^2_{tr}(X_t)) \leq 7$.
\end{proposition}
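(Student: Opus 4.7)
The plan is to use the identity $\dim H^2_{\mathrm{tr}}(X_t) = b_2(X_t) - \rho(X_t)$, compute $b_2(X_t)$ from an Euler-characteristic count, and lower-bound $\rho(X_t)$ by counting components of reducible fibers via the Shioda--Tate formula.

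First I would identify the Kodaira types of the singular fibers of $\pi \colon \mathcal{E} \to \mathbb{P}^1$. Tate's algorithm applied to $y^2 = x(x-1)(x-z^2)$ (equivalently, direct inspection of the discriminant $\Delta \propto z^4(z^2-1)^2$) yields type $I_4$ at $z = 0, \infty$ and type $I_2$ at $z = \pm 1$; the Euler numbers of these fibers sum to $12$, as befits a rational elliptic surface. For $t$ outside the excluded locus, the double cover $E_t \to \mathbb{P}^1$ cut out by $w^2 = tz(z-1)(z+1) + t^2$ is étale over $z = 0, \pm 1$ and ramified of index two over $z = \infty$; the standard behavior of Kodaira fibers under ramified base change then produces the configuration $2\,I_4 + 4\,I_2 + I_8$ for $\pi_t \colon X_t \to E_t$, distributed at the seven marked points of the diagram.

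Second, $e(X_t) = 2\cdot 4 + 4\cdot 2 + 8 = 24$, and since $E_t$ has genus one we have $b_2(X_t) = e(X_t) - 2 + 2\,b_1(X_t) = 26$. Third, applying Shioda--Tate to $\pi_t$ (which inherits a section from $\mathcal{E} \to \mathbb{P}^1$) gives
$$\rho(X_t) \;\geq\; 2 + \sum_v (m_v - 1) \;=\; 2 + (2\cdot 3 + 4\cdot 1 + 7) \;=\; 19,$$
and hence $\dim H^2_{\mathrm{tr}}(X_t) \leq 26 - 19 = 7$, as claimed.

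The only step requiring genuine care is the identification of fiber types after base change; in particular, one must check that the three finite branch points of $E_t \to \mathbb{P}^1$ avoid $\{0, \pm 1\}$ and are mutually distinct, which is precisely what the condition $t \neq 0, \pm 2/(3\sqrt{3}), \infty$ guarantees. The rest is straightforward bookkeeping.
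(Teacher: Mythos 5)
Your proof is correct and follows essentially the same route as the paper's: both compute $b_2(X_t) = 26$ and subtract the $19$ algebraic classes coming from the zero section, a general fiber, and the $4+6+7$ components of the $4\,I_2$, $2\,I_4$, $I_8$ fibers. You add some useful explicitness that the paper omits (deriving the fiber configuration via Tate's algorithm and base change, citing Shioda--Tate by name, and using $b_2 = e - 2 + 2b_1$ in place of the Sch\"utt--Shioda identity $b_2 = 2p_g + 10\chi + 2g$), but the underlying argument is identical.
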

\begin{proof} 
Set $X := X_t , E := E_t, \pi := X \to E$. We have that $X$ has 4 singular fibers of type $I_2$, 2 of type $I_4$ and one of type $I_8$. We use known formulas for the hodge numbers of Elliptic Surfaces, see \cite{SS}. We have: $$b_2(X) = 2p_g(X) + 10\chi(X) + 2g(E)$$ where $p_g(X)=\dim H^{2,0}(X)$ is the geometric genus of $X$, $\chi(X)$ is the Euler characteristic and $g(E)$ is the genus of $E$. In our particular case we have:
$$g(E)=1,\;\chi(X)=2;\;p_2(X)=\chi(X)-1+g(E)=2$$
which amounts to $b_2(X)=26$. Since the components of the singular fibers give algebraic cycles, and we have $2+4+6+7=19$ of them, the dimension of transcendental cycles can be at most 7, and it is 7 if we can find 7 linearly independent transcendental  cycles.
\end{proof}
\begin{remark}
In fact, $dim(H^2_{tr}(X_t)) = 7$ as we shall see.
\end{remark}
We now describe a particular choice of 7-dimensional basis of 2-cycles that we will use henceforward. First, consider the 1-cycles $\alpha,\beta,\gamma_{-1},\gamma_0,\gamma_1$ over each $E_t$, as described in figure \ref{fig:one_cycles}. Denote by $\delta_1,\delta_2$ the basis for the local system over each point of $E_t$, with $\delta_1\cdotp \delta_2 = 1$. Let's see how they behave under the action of the monodromy, but first we analyze the situation over $\mathbb{P}^1$(with $z^2$-coordinate) before the double cover, i.e on $y^2=x(x-1)(x-z)$, so that we can predict how the cycles change after the double cover. 
\begin{figure}
\centering
\begin{asy}
//settings.render = 8;
//settings.prc = false;

import graph3;
import contour;
size3(8cm);
currentprojection = orthographic(10,1,4);
defaultrender = render(merge = true);
// create torus as surface of rotation
int umax = 40;
int vmax = 40;
surface torus = surface(Circle(c=2Y, r=0.6, normal=X, n=vmax), c=O, axis=Z, n=umax);
torus.ucyclic(true);
torus.vcyclic(true);

pen meshpen = 0.3pt + gray;

draw(torus, surfacepen=material(diffusepen=white+opacity(0.6), emissivepen=white));
for (int u = 0; u < umax; ++u)
  draw(torus.uequals(u), p=meshpen);
for (int v = 0; v < vmax; ++v)
  draw(graph(new triple(real u) {return torus.point(u,v); }, 0, umax, operator ..),
       p=meshpen);

pair inf = (30, 0);
dot(torus.point(inf.x, inf.y), L="$\infty$", align=W);
pair a = (30, 4);
dot(torus.point(a.x, a.y), L="$0$", align=W);
pair b = (30, -4);
dot(torus.point(b.x, b.y), L="$0'$", align=4X);
pair c = (25, 4);
dot(torus.point(c.x, c.y), L="$-1$", align=W);
pair d = (35, -4);
dot(torus.point(d.x, d.y), L="$-1'$");
pair e = (35, 4);
dot(torus.point(e.x, e.y), L="$1$");
pair f = (25, -4);
dot(torus.point(f.x, f.y), L="$1'$", align=W);
path3 abpath(int ucycles, int vcycles, pair one, pair two) {
  pair bshift = (ucycles*umax, vcycles*vmax);
  triple f(real t) {
    pair uv = (1-t)*one + t*(two+bshift);
    return torus.point(uv.x, uv.y);
  }
  return graph(f, 0, 1, operator ..);
}

real linewidth = 0.8pt;

draw(abpath(0,1,a,b), p=linewidth + orange);
draw(abpath(0,0,a,b), p=linewidth + gray + dashed);
draw(abpath(0,1,c,d), p=linewidth + red);
draw(abpath(0,0,c,d), p=linewidth + gray + dashed);
draw(abpath(0,1,e,f), p=linewidth + darkgreen);
draw(abpath(0,0,e,f), p=linewidth + gray + dashed);
\end{asy}
\caption[1-cycles over the Base $E_t$]{1-cycles over the Base $E_t$
\label{fig:one_cycles}}
\end{figure}
The degeneration in this case is a nodal degeneration on $0,1$, the monodromy matrices are then given by the Picard-Lefschetz formula:
\begin{equation}
\begin{split}
    & T_0 = \left(\begin{smallmatrix} 1&2\\ 0&1 \end{smallmatrix}\right)\\
    & T_1 = \left(\begin{smallmatrix} 1&0\\ -2&1 \end{smallmatrix}\right)
\end{split}
\end{equation}
Now consider the $\mathbb{P}^1$ which has $z$ coordinate. In order to make it simply connected, we draw some cuts over it. If we go through paths around $-1,0,1$, as described in figure \ref{fig:z_plane_cycles}, we can look at the image of those cycles under the double cover and see what the monodromy is. For example, as go around -1 on the $z$-plane, the image goes to once around zero, then once around one and one more time around zero again on the $z^2$-plane, hence we can deduce that the local monodromy around $-1$ is $T_0*T_1*T_0^{-1}$. Applying the same reasoning to 0 and 1, we get the resulting monodromies:
\begin{equation}\label{local_monodromies}
\begin{split}
    & \tilde{T_{-1}} = \left(\begin{smallmatrix} -3&8\\ -2&5 \end{smallmatrix}\right)\\
    & \tilde{T_0} = \left(\begin{smallmatrix} 1&4\\ 0&1 \end{smallmatrix}\right)\\
    & \tilde{T_1} = \left(\begin{smallmatrix} 1&0\\ -2&1 \end{smallmatrix}\right)
\end{split}
\end{equation}

The vanishing cycles are: $2\delta_1 + \delta_2$ at -1, $\delta_1$ at 0, $\delta_2$ at 1. Set $\eta_1 = \delta_2$ and $\eta_2 = 2\delta_1 + \delta_2$, so $\eta_1\cdotp \eta_2 = -2$ and the vanishing cycle at 0 is precisely $\delta_1=\frac{1}{2}(\eta_2-\eta_1)$.

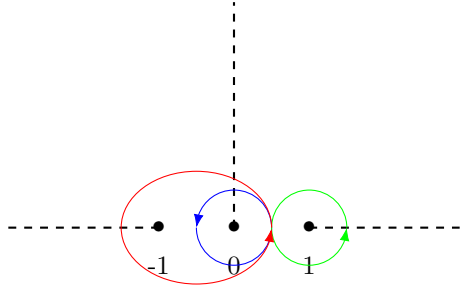
\begin{figure}
\centering
\begin{tikzpicture}
\node at (0,-0.5) {0};
\node at (1,-0.5) {1};
\node at (-1,-0.5) {-1};
\foreach \Point in {(1,0),(-1,0),(0,0)}\node at \Point {\textbullet};
\draw[black,thick,dashed] (1,0) -- (3,0) ;
\draw[black,thick,dashed] (0,0) -- (0,3);
\draw[black,thick,dashed] (-3,0) -- (-1,0);
\draw[blue,arrows={-Latex[length=5]}] (-0.5,0) arc[radius=0.5,start angle=180,end angle=540];
\draw[green,arrows={-Latex[length=5]}] (1.5,0) arc[radius=0.5,start angle=0,end angle=360];
\draw[red,arrows={-Latex[length=5]}] (0.5,0) arc[x radius = 1, y radius = 0.75, start angle= 0, end angle= 359];
\end{tikzpicture}
\caption[Cycles enclosing -1,0 and 1]{Cycles enclosing -1,0 and 1 in $\mathbb{P}^1$ minus the cuts.
\label{fig:z_plane_cycles}}
\end{figure}

Note that these local monodromies are quasi-unipotent, more generally we have:

\begin{theorem}\textbf{(Monodromy Theorem)} \cite{PM}\label{mon_theorem}
Let $p:\Delta^*\to \Gamma\backslash D$ be a variation of weight $w$ Hodge structures over the puncture disk, with monodromy generated by $T$. Then $T$ is quasi-unipotent and has index of unipontency $w+1$.
\end{theorem}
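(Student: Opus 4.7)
The plan is to prove the two halves of the assertion separately: first, that every eigenvalue of $T$ is a root of unity, and second, that for the unipotent power $T^k$, the logarithm $N = \log(T^k)$ satisfies $N^{w+1} = 0$. Both parts use the lift of $p$ to a horizontal holomorphic map $\tilde{p} : \mathbb{H} \to D$ from the universal cover of $\Delta^*$, equivariant for the action of $T$ on the Griffiths classifying space $D$.

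For quasi-unipotence, I would exploit the curvature properties of $D$: its invariant hermitian metric has negative holomorphic sectional curvature along horizontal directions. Griffiths transversality forces $\tilde{p}$ to be horizontal, so an Ahlfors--Schwarz-type argument yields a logarithmic growth bound $\mathrm{dist}_D(\tilde{p}(z), \tilde{p}(z_0)) \leq C(1 + \log\,\mathrm{Im}\,z)$. Comparing this with the relation $\tilde{p}(z + 1) = T \cdot \tilde{p}(z)$, and unwinding the Lie-theoretic description of $D$ as in Borel's original argument, shows that every eigenvalue of $T$ on the reference fibre must have modulus one. Since the local system is defined over $\mathbb{Z}$, the characteristic polynomial of $T$ lies in $\mathbb{Z}[t]$, and Kronecker's theorem on algebraic integers whose conjugates all lie on the unit circle forces these eigenvalues to be roots of unity.

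Replacing $T$ by a suitable power so that it is unipotent, I set $N = \log T$. The key fact is that $N$ shifts the Hodge filtration by one step, $N F^p \subseteq F^{p-1}$. I would derive this from Schmid's nilpotent orbit theorem, whose content here is that the twisted filtration $\exp(-zN) \cdot F^{\bullet}(\tilde{p}(z))$ extends holomorphically across the puncture to a map $\Delta \to \check{D}$; Griffiths transversality of this extended map gives precisely the one-step shift. Iterating through the $w+1$ nontrivial layers $F^0 \supset F^1 \supset \cdots \supset F^w \supset 0$ then shows that $N^{w+1}$ sends $F^0$ into $0$, which is exactly the claimed bound on the index of unipotency.

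The main obstacle is the modulus-one statement. It is tempting to argue that since $D$ sits inside a compact flag variety, $\tilde{p}$ is bounded and hence $T$ has finite order; this is false, because $\tilde{p}$ generically escapes to the boundary $\check{D} \setminus D$ as $\mathrm{Im}\,z \to \infty$. The real content lies in isolating the horizontal distribution, where negative curvature is available, and applying the distance-decreasing principle only there. Once that step is secured, the remainder of the proof -- Kronecker's theorem, Schmid's shift, and the Hodge-step count -- is essentially formal.
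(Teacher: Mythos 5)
The paper does not prove this theorem---it cites it to \cite{PM}---so there is no in-paper argument to compare against; I will assess the proposal on its own.

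Your treatment of quasi-unipotence is the standard Borel argument and is sound: horizontality plus the negative holomorphic sectional curvature of the period domain along horizontal directions gives the distance-decreasing estimate, the $T$-equivariance then bounds the eigenvalue moduli by $1$, and Kronecker's theorem (using that the characteristic polynomial has integer coefficients) upgrades this to roots of unity.

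The second half has a genuine gap. The one-step shift $N F^p \subseteq F^{p-1}$ for the \emph{limiting} Hodge filtration is indeed a consequence of the nilpotent orbit theorem, but iterating it does \emph{not} bound the nilpotency index. The shift moves you \emph{down} the filtration, and the filtration stabilizes at $F^0 = V_{\mathbb C}$; so $N^{w+1} F^0 \subseteq F^{-w-1} = V_{\mathbb C}$ says nothing. Put differently: you want to land in $F^{w+1} = 0$, which means going \emph{up}, the opposite direction from the shift. Concretely, for a weight-$0$ variation the filtration is $F^0 = V \supset F^1 = 0$ and the shift condition $N F^0 \subseteq F^{-1}$ is vacuously satisfied by \emph{any} operator $N$, yet the theorem asserts $N = 0$; so transversality alone cannot be the ingredient that controls the index. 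What is missing is the mixed Hodge structure on the limit. The monodromy weight filtration $W(N)$ centered at $w$ is characterized by $N W_k \subseteq W_{k-2}$ together with the isomorphisms $N^k : \mathrm{Gr}^W_{w+k} \xrightarrow{\ \sim\ } \mathrm{Gr}^W_{w-k}$; Schmid's $\mathrm{SL}_2$-orbit theorem (or the Cattani--Kaplan--Schmid refinement) shows $(W(N), F_\infty)$ is a polarized mixed Hodge structure whose graded pieces $\mathrm{Gr}^W_k$ are pure of weight $k$. Since $F^{w+1}_\infty = 0$ and, by the real structure, the conjugate filtration satisfies the symmetric bound, $\mathrm{Gr}^W_k$ can be nonzero only for $0 \le k \le 2w$. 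The hard-Lefschetz isomorphisms then force $N^{w+1} = 0$. You need this weight-filtration argument (and hence the full $\mathrm{SL}_2$-orbit package, not merely the nilpotent orbit theorem and transversality) to close the second half.
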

By this theorem, the local monodromies(\ref{local_monodromies}) have to have index of unipontency 2, which is true, as one quickly verifies. Therefore, the log monodromy is just $T-I$, where $T$ is the local monodromy. Recall the formula for the local monodromy in a nodal degeneration:
\begin{proposition}\textbf{(Picard-Lefschetz formula)}\label{plformula}
In a nodal degeneration with vanishing cycles $\delta_i$, the local monodromy is: $$T(x) = x \pm \sum_i \langle x,\delta_i \rangle \delta_i$$
\end{proposition}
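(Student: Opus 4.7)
The plan is to reduce to a local model via the holomorphic Morse lemma, identify the monodromy as a Dehn twist along each vanishing cycle, and then compute how a single Dehn twist acts on $H_1$ of the smooth fiber.

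First, at each node $p_i$ of the central fiber, the holomorphic Morse lemma gives coordinates $(u,v)$ on a neighborhood in the total space and a coordinate $t$ on a disc in the base so that the family is $t = uv$, or equivalently $t = u^2 + v^2$ after a linear change. In a nearby smooth fiber $X_{t_0}$ the vanishing cycle $\delta_i$ is represented by the oriented real circle $\{(u,v) \in \mathbb{R}^2 : u^2 + v^2 = t_0\}$, which collapses to $p_i$ as $t \to 0$.

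Next, I would show that the geometric monodromy $T$ is, up to isotopy, a product of Dehn twists $\prod_i \tau_{\delta_i}$. Outside small neighborhoods of the nodes the family is a proper submersion over the punctured disc, so by Ehresmann's theorem it admits a smooth trivialization there; inside each local chart one transports along a loop encircling $0$ in the base and the explicit computation in the model $t=uv$ produces a Dehn twist supported in an annular neighborhood of $\delta_i$. Matching the local and global trivializations in the collar between the two regions gives $T$ as the composition of these Dehn twists on the smooth fiber.

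Finally, to pin down the action of a single Dehn twist on homology, given $x \in H_1(X_{t_0})$ I would pick a representative loop transverse to $\delta_i$. Since $\tau_{\delta_i}$ is the identity outside an annular neighborhood of $\delta_i$, the loop is unchanged away from $\delta_i$, while each transverse intersection point contributes one signed copy of $\delta_i$, the sign determined by the local intersection index. Summing over the intersection points gives $\tau_{\delta_i}(x) = x + \langle x, \delta_i \rangle \delta_i$, and summing over $i$ produces the stated formula. The ambiguous sign in the proposition accommodates the higher-dimensional case, where Picard–Lefschetz reflects rather than transvects according to the parity of the fiber dimension.

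The main obstacle is step two: rigorously upgrading the local Morse model to a global identification of $T$ with a product of Dehn twists compactly supported around the vanishing cycles. Once that geometric picture is secured, the homological calculation in the final step is a standard transverse-intersection argument on a surface.
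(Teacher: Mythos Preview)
The paper does not supply a proof of this proposition: it is quoted as a classical fact (the Picard--Lefschetz formula) and used as a black box in the subsequent computations. So there is nothing in the paper to compare your argument against.

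That said, your outline is the standard argument and is essentially correct. A small point worth making explicit in your write-up: the passage from ``$T$ is a composition of Dehn twists $\prod_i \tau_{\delta_i}$'' to the additive formula $T(x) = x \pm \sum_i \langle x,\delta_i\rangle \delta_i$ uses that the vanishing cycles attached to distinct nodes are supported in disjoint annular neighborhoods, hence $\langle \delta_i,\delta_j\rangle = 0$ for $i\neq j$ and the individual transvections commute on homology. Without this, composing transvections produces cross-terms $\langle x,\delta_j\rangle\langle \delta_j,\delta_i\rangle \delta_i$ that would spoil the simple sum. Otherwise your identification of the obstacle (patching the local Morse models to the global Ehresmann trivialization) is exactly right, and once that is granted the homological computation is routine.
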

\begin{corollary}\label{pfcor}
In a nodal degeneration of weight $1$ Hodge structures, we have up to a sign: $$N(x):= \log T(x) =\sum_i \langle x,\delta_i \rangle \delta_i$$
\end{corollary}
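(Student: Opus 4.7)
The plan is to combine the two preceding results directly. Picard-Lefschetz (Proposition \ref{plformula}) tells us $T(x) - x = \pm\sum_i \langle x, \delta_i\rangle \delta_i$, so if we can show that $\log T$ coincides with $T - I$, the formula follows. This reduces the corollary to verifying that the nilpotent part satisfies $(T-I)^2 = 0$, because the series $\log(I + N) = N - N^2/2 + N^3/3 - \cdots$ truncates to $N$ exactly when $N^2 = 0$.

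To establish $(T-I)^2 = 0$, I would proceed in either of two equivalent ways. The cleanest is to invoke the Monodromy Theorem (Theorem \ref{mon_theorem}): since the variation has weight $w = 1$, the index of unipotency is at most $2$, i.e.\ some power $T^k$ satisfies $(T^k - I)^2 = 0$; for a nodal degeneration the Picard-Lefschetz formula already exhibits $T$ itself as unipotent, so we may take $k = 1$ and conclude $(T-I)^2 = 0$. Alternatively, one can check this directly from Proposition \ref{plformula}: writing $R(x) = \sum_i \langle x, \delta_i \rangle \delta_i$, we get $R^2(x) = \sum_{i,j} \langle x, \delta_j\rangle\langle \delta_j, \delta_i\rangle \delta_i$; in weight $1$ the intersection pairing is antisymmetric so $\langle \delta_i, \delta_i\rangle = 0$, and vanishing cycles attached to distinct nodes in a single singular fiber are represented by pairwise disjoint loops, giving $\langle \delta_i, \delta_j\rangle = 0$ for $i \neq j$ as well. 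Either route yields $R^2 = 0$.

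Given this, the computation $\log T = (T - I) - \tfrac{1}{2}(T-I)^2 + \cdots = T - I$ is immediate, and combining with Picard-Lefschetz produces the stated formula, with the $\pm$ inherited from Proposition \ref{plformula}. I do not foresee any serious obstacle: the corollary is essentially a one-line consequence of the two results immediately preceding it, and the only mild subtlety is the orthogonality of distinct vanishing cycles, which is transparent from the topological picture of a nodal degeneration and can in any case be bypassed by appealing to the Monodromy Theorem.
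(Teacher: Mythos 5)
Your proposal is correct and follows the same reasoning the paper uses implicitly: the paragraph immediately preceding the corollary invokes the Monodromy Theorem to conclude the index of unipotency is $2$, hence $\log T = T - I$, and then combines this with Picard--Lefschetz. Your route 2 (the direct computation of $R^2 = 0$ via antisymmetry and disjointness of vanishing cycles) is actually the cleaner path, since route 1's claim that Picard--Lefschetz ``already exhibits $T$ as unipotent'' quietly presupposes nilpotence of $R$, which is precisely what route 2 establishes; but both lead to the same conclusion and the overall argument is sound.
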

This is equivalent of saying that the image of the log monodromy in a nodal degeneration of weight one Hodge structures is generated by the vanishing cycles.

\section{Construction of the 2-cycles}
We use henceforward the notation $a\times b$ to denote the 2-cycle on $X_t$ obtained by taking the 1-cycle $a$ on a fiber of $\pi_t$ and continuing it along the 1-cycle $b$ on $E_t$. Note that the cycle $a$ has to be monodromy invariant when one goes over $b$, otherwise this definition doesn't make sense.
Now that our notation is established we proceed with the definition of a 7-dimensional subspace of $H^2_{tr}(X_t)$:
\begin{equation}
\begin{aligned}
     & A_1 = \eta_1 \times \alpha  & C_{-1} &= \eta_2 \times \gamma_{-1}\\
     & A_2 = \eta_2 \times \alpha  & C_{0} &= \frac{1}{2}(\eta_2 -\eta_1) \times \gamma_{0}\\
     & B_1 = \eta_1 \times \beta   & C_{1} &= \eta_1 \times \gamma_{1}\\
     & B_2 = \eta_2 \times \beta   & &
\end{aligned}
\end{equation}
Note that, $A_1,A_2,B_1,B_2$ are trivially transcendental, the same is not true for the $C_i$. The reason is that the $C_i$ may--in fact they do--contain algebraic cycles resulting from classes of singular fibers. To overcome this, we have to ``add" enough cycles in order to make all $C_i$ transcendental.

Let's take a closer look at the $C_{-1}$, for example. As we can see from figure \ref{fig:Cm1}, we can pick a cycle equivalent to $C_{-1}$ but with minimal intersection, in other words:
\begin{equation}
\begin{split}
    & C_{-1} \cdotp D_{-} = -1\\
    & C_{-1} \cdotp D_{+} = 1\\
    & C_{-1} \cdotp E_{-} = -1\\
    & C_{-1} \cdotp E_{+} = 1
\end{split}
\end{equation}
\begin{figure}
\centering
\includegraphics[scale=0.7]{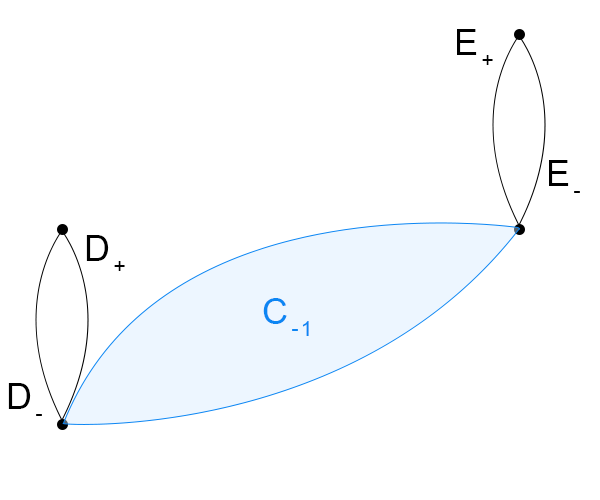}
\caption[The 2-cycle $C_{-1}$]{The 2-cycle $C_{-1}$
\label{fig:Cm1}}
\end{figure}

Now, let's try to eliminate the intersections of $C_{-1}$ with algebraic classes. Start by setting:
\begin{equation}
    \widetilde{C}_{-1} := C_{-1} + aD_{-} + bD_{+} + cE_{-} + dE_{+}
\end{equation}
If $\sigma$ is the class of the zero section, then the transcendental condition reduces to the following system of linear equations:
\begin{equation}
\begin{split}
    &\widetilde{C}_{-1} \cdotp D_{-} = 0\\
    &\widetilde{C}_{-1} \cdotp D_{+} = 0\\
    &\widetilde{C}_{-1} \cdotp \sigma = 0\\
    &\widetilde{C}_{-1} \cdotp E_{-} = 0\\
    &\widetilde{C}_{-1} \cdotp E_{+} = 0
\end{split}
\end{equation}

Without loss of generality we may assume $a=d=0$. Solving the system we get that:
\begin{equation}
    \widetilde{C}_{-1} = C_{-1} + \frac{1}{2}D_{+}  -\frac{1}{2}E_{-}
\end{equation}

By following the exact same reasoning, we deduce that:

\begin{equation}
    \widetilde{C}_{1} = C_{1} + \frac{1}{2}G_{+}  -\frac{1}{2}H_{-}
\end{equation}
where $G_{-}$ and $H_{-}$ are the components of the singular fibers of the endpoints.

Now we address $C_0$, consider the figure \ref{fig:C0}. Following the idea above, we set:
\begin{equation}
    \widetilde{C}_{0} = C_{0} + aL_1 + bL_2 + cL_3 -dF_1 -eF_2 -fF_3
\end{equation}
\begin{figure}
\centering
\includegraphics[scale=0.8]{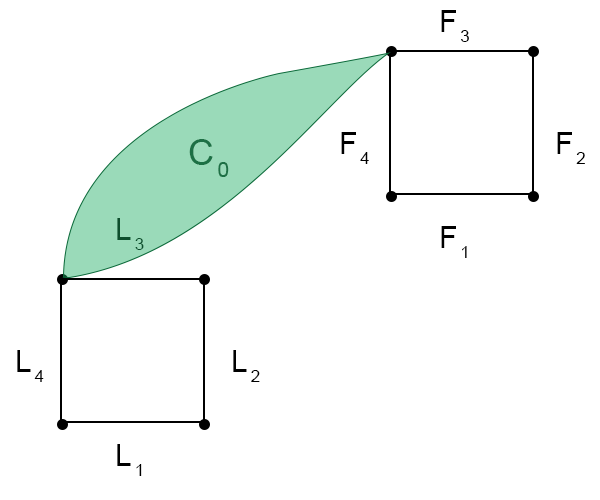}
\caption[The 2-cycle $C_0$]{The 2-cycle $C_0$
\label{fig:C0}}
\end{figure}

We again solve the system of equations required for transcendency:
\begin{equation}
\begin{split}
    &\widetilde{C}_{0} \cdotp L_1 = 0\\
    &\widetilde{C}_{0} \cdotp L_2 = 0\\
    &\widetilde{C}_{0} \cdotp L_3 = 0\\
    &\widetilde{C}_{0} \cdotp F_1 = 0\\
    &\widetilde{C}_{0} \cdotp F_2 = 0\\
    &\widetilde{C}_{0} \cdotp F_3 = 0\\
    &\widetilde{C}_{0} \cdotp \sigma = 0
\end{split}
\end{equation}

The resulting cycle is:
\begin{equation}
    \widetilde{C}_{0} = C_{0} + \frac{3}{4}L_1 + \frac{1}{2}L_2 + \frac{1}{4}L_3 -\frac{3}{4}F_1 -\frac{1}{2}F_2 -\frac{1}{4}F_3
\end{equation}

\section{Computation of the monodromies}
Denote by $V$ the space generated by the transcendental cycles  $(A_1,A_2,$ $B_1,B_2,\widetilde{C_{-1}},\widetilde{C_0},\widetilde{C_1})$. The intersection matrix is:$$Q = \begin{bmatrix}
            0&0&0&2&0&0&0\\
            0&0&-2&0&0&0&0\\
            0&-2&0&0&0&0&0\\
            2&0&0&0&0&0&0\\
            0&0&0&0&-1&1&2\\
            0&0&0&0&1&-1/2&-1\\
            0&0&0&0&2&-1&-1
            \end{bmatrix}$$
Notice that since $det(Q) \neq 0$, we have $dim(V)=7$. Since $V\subset H^2_{tr}(X_t)$, Proposition \ref{trd} implies that $dim H^2_{tr}(X_t) = 7$.

With our transcendental basis $(A_1,A_2,B_1,B_2,\widetilde{C_{-1}},\widetilde{C_0},\widetilde{C_1})$ defined, we now compute the monodromies matrices at the singular points $t=\frac{-2}{3\sqrt{3}},0,\frac{2}{3\sqrt{3}},\infty$. For computational purposes, we will work with figure \ref{fig:cuts} instead of figure \ref{fig:one_cycles}.
\begin{figure}
\centering
\begin{tikzpicture}
\draw[gray,thick,dashed] (2,0) -- (4,0) ;
\draw[gray,thick,dashed] (0,0) -- (0,2);
\draw[gray,thick,dashed] (-4,0) -- (-2,0);
\draw[black,thick,dashed] (0,2) -- (0,3);
\draw[black,thick,dashed] (0,-1.5) -- (0,-2.5);
\draw[black,thick,dashed] (-1,1) -- (1,1);
\draw[blue,thick] (0,-1.85) arc[radius=0.35,start angle=270,end angle=450] arc[radius=0.75,start angle=270,end angle=180] -- (-0.75,1);
\draw[blue,thick,dashed] (-0.75,1) arc[radius=0.35,start angle=0,end angle=180] -- (-1.45,-0.4) arc[radius=1.45,start angle=180,end angle=270];
\draw[Lavender,thick,dashed] (0,-1.75) arc[radius=0.35,start angle=270,end angle=90] arc[radius=0.75,start angle=270,end angle=360] -- (0.75,1);
\draw[Lavender,thick] (0.75,1) arc[radius=0.35,start angle=180,end angle=0] -- (1.45,-0.4) arc[radius=1.35,start angle=360,end angle=270];
\draw[Apricot,thick] (2,0) -- (2,1) arc[radius=0.70,start angle=0,end angle=180];
\draw[Apricot,thick,dashed] (0.6,1) -- (0.6,-0.3) arc[radius=0.6,start angle=360,end angle=180] -- (-0.6,1);
\draw[Apricot,thick] (-0.6,1) arc[radius=0.5,start angle=0,end angle=180] -- (-1.6,-0.7) arc[radius=1.6,start angle=180,end angle=270];
\draw[Apricot,thick,dashed] (0,-2.3) arc[x radius= 2,y radius=2.3,start angle=270,end angle=360];
\draw[OliveGreen,thick,dashed] (-2,0) arc[radius= 2.2,start angle=180,end angle=270];
\draw[OliveGreen,thick] (0,-2.2) arc[x radius = 1.8,y radius=2,start angle=270,end angle=360] -- (1.8,1.8) arc[radius = 0.7,start angle=0,end angle=180] -- (0.4,1);
\draw[OliveGreen,thick,dashed] (0.4,1) -- (0.4,-0.2) arc[radius = 0.4,start angle=360,end angle=180] -- (-0.4,1);
\draw[OliveGreen,thick] (-0.4,1) arc[radius = 0.8,start angle=0,end angle=180] -- (-2,0);
\draw[OliveGreen,thick] (-0.4,1) arc[radius = 0.8,start angle=0,end angle=180] -- (-2,0);
\draw[RubineRed,thick] (0,0) arc[radius = 0.2,start angle=270,end angle=180] -- (-0.2,1);
\draw[RubineRed,thick,dashed] (-0.2,1) arc[radius = 0.8,start angle=0,end angle=180] -- (-1.8,0) arc[x radius = 1.8,y radius = 2,start angle=180,end angle=270];
\draw[RubineRed,thick] (0,-2) arc[radius = 1.6,start angle=270,end angle=360] -- (1.6,1) arc[radius = 0.7,start angle=180,end angle=90] arc[radius = 1.05,start angle=0,end angle=180] -- (0.2,1);
\draw[RubineRed,thick,dashed] (0.2,1) arc[x radius = 0.2,y radius =1,start angle=360,end angle=270];
\node at (0,-0.35) {0};
\node at (2,-0.35) {1};
\node at (-2,-0.35) {-1};
\node[anchor=west] at (0,2) {$\infty$};
\node at (1,1) {$\pmb{\times}$};
\node at (-1,1) {$\pmb{\times}$};
\node at (0,-1.5) {$\pmb{\times}$};
\foreach \Point in {(2,0),(-2,0),(0,0),(0,2)}\node at \Point {\textbullet};
\end{tikzpicture}
\caption[The cuts that define the base curve $E_t$]{The 1-cycles \textcolor{Lavender}{$\alpha$},\textcolor{blue}{$\beta$},\textcolor{OliveGreen}{$\gamma_{-1}$},\textcolor{RubineRed}{$\gamma_0$} and \textcolor{Apricot}{$\gamma_1$} over the Elliptic curve $E_t$
\label{fig:cuts}}
\end{figure}
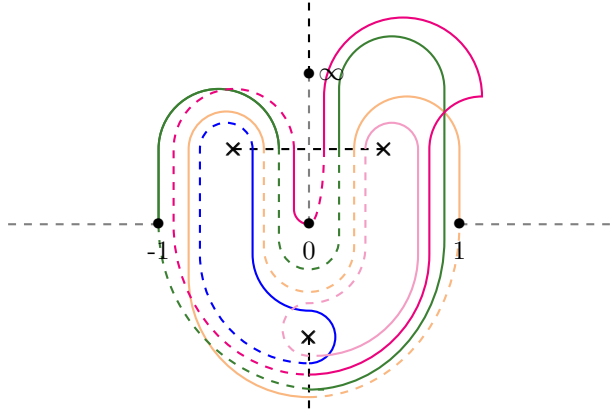
When $t\rightarrow\pm\frac{2}{3\sqrt{3}}$, we have a nodal degeneration on the base curve $E_t$. In figure \ref{fig:cuts}, such degeneration can be described as when the ``x'' of one cut merges itself with an ``x'' of the other cut. 

It's straightforward to conclude that in this case, the $\widetilde{C}_i$ remain unchanged, while in the other cases the cycles over the vanishing cycles remain unchanged. 

Finally, the cycles that do change, do it so according to the Picard-Lefschetz formula (\ref{plformula}), since the degeneration is nodal. In conclusion, we have the following monodromies for $\frac{2}{3\sqrt{3}},\frac{-2}{3\sqrt{3}}$ respectively:
\begin{equation}
    \begin{split}
        M_{+} &= \begin{bmatrix}
            1&0&1&0&0&0&0\\
            0&1&0&1&0&0&0\\
            0&0&1&0&0&0&0\\
            0&0&0&1&0&0&0\\
            0&0&0&0&1&0&0\\
            0&0&0&0&0&1&0\\
            0&0&0&0&0&0&1
            \end{bmatrix}\\
        M_{-} &= \begin{bmatrix}
            1&0&0&0&0&0&0\\
            0&1&0&0&0&0&0\\
            -1&0&1&0&0&0&0\\
            0&-1&0&1&0&0&0\\
            0&0&0&0&1&0&0\\
            0&0&0&0&0&1&0\\
            0&0&0&0&0&0&1
            \end{bmatrix}
    \end{split}
\end{equation}

The situation when $t\rightarrow 0$ is much more subtle. If one looks at figure \ref{fig:cuts}, the endpoints of the cuts behave roughly as $-1-\frac{t}{2},t$ and $1-\frac{t}{2}$, therefore when t go through a path around 0, the endpoints will certain move, but this time not in a nice way as they did in the case above, they will instead make the $\gamma_i$ cycles cross each other and also $\alpha$ and $\beta$. This is the crucial point which results in $G_2$ monodromy, as we shall verify. 

\begin{figure}
\centering
\begin{tikzpicture}
\draw[gray,thick,dashed] (2,0) -- (4,0) ;
\draw[gray,thick,dashed] (0,0) -- (0,2);
\draw[gray,thick,dashed] (-4,0) -- (-2,0);
\draw[black,thick,dashed] (0,2) -- (0,3);
\draw[black,thick,dashed] (0,-1.5) -- (0,-2.5);
\draw[black,thick,dashed] (-1,1) -- (1,1);
\draw[Lavender,thick] (0,-1.9) arc[radius=0.35,start angle=270,end angle=450] arc[radius=0.7,start angle=270,end angle=90] arc[radius=0.7,start angle=90,end angle=0] arc[radius=1,start angle=180,end angle=360] -- (2.7,1) arc[radius=1,start angle=0,end angle=180];
\draw[Lavender,thick,dashed] (0.7,1) arc[radius=0.35,start angle=180,end angle=360] arc[radius=0.4,start angle=180,end angle=0] -- (2.2,-0.35) arc[radius=0.5,start angle=360,end angle=180] arc[x radius = 1.2,y radius=1,start angle=0,end angle=180] arc[x radius = 1.2,y radius=1.55,start angle=180,end angle=270];
\node at (0,-0.35) {0};
\node at (2,-0.35) {1};
\node at (-2,-0.35) {-1};
\node[anchor=west] at (0,2) {$\infty$};
\node at (1,1) {$\pmb{\times}$};
\node at (-1,1) {$\pmb{\times}$};
\node at (0,-1.5) {$\pmb{\times}$};
\foreach \Point in {(2,0),(-2,0),(0,0),(0,2)}\node at \Point {\textbullet};
\end{tikzpicture}
\caption[The deformed $\alpha$ after monodromy]{$\widetilde{\alpha}$, the resulting cycle after monodromy, when $t$ is close to $0$.
\label{fig:talphab}}
\end{figure}
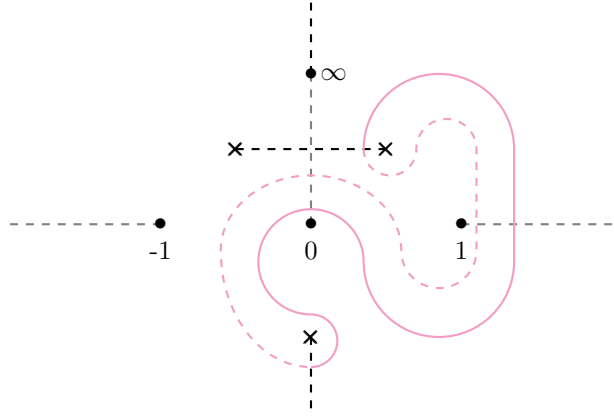
Let's start off by analyzing the resulting cycle $\widetilde{\alpha}$ of the monodromy action on $\alpha$. If we look at figure \ref{fig:talphab}, we see not only $\alpha$ is no longer a vanishing cycle, but also that it crosses the cuts trivializing the local system. What that means basically is that $\eta_1$ and $\eta_2$ might change after monodromy; this is in fact the case, as we shall see.   

Now, consider $\widetilde{\alpha}-\alpha$, as depicted in figure \ref{fig:talpha}. Note that the vanishing cycle at 1 is $\eta_2$, hence any cycle which is the continuation of $\eta_2$ won't have monodromy around 1, so we can simplify $\widetilde{\alpha}-\alpha$ to encircle only 0, and vice-versa. Using the expression for the local monodromies \ref{local_monodromies} and formula \ref{pfcor}, we can compute the resulting 2-cycles for the ones that are over $\alpha$, i.e $A_1,A_2$. Denote by $M_0$ the monodromy at 0, then:
\begin{equation}
    \begin{split}
        & M_0(A_1) = A_1 - 2A_2 + 2B_1 - 2B_2 -4\widetilde{C_0}\\
        & M_0(A_2) = 2A_1 - 3A_2 + 6B_1 - 2B_2 -4\widetilde{C_0} -4\widetilde{C_1}
    \end{split}
\end{equation}

\begin{figure}
\centering
\begin{tikzpicture}
\draw[gray,thick,dashed] (2,0) -- (4,0) ;
\draw[gray,thick,dashed] (0,0) -- (0,2);
\draw[gray,thick,dashed] (-4,0) -- (-2,0);
\draw[black,thick,dashed] (0,2) -- (0,3);
\draw[black,thick,dashed] (0,-1.5) -- (0,-2.5);
\draw[black,thick,dashed] (-1,1) -- (1,1);
\draw[Lavender,thick] (0,-1.9) arc[radius=0.35,start angle=270,end angle=450] arc[radius=0.7,start angle=270,end angle=90] arc[radius=0.7,start angle=90,end angle=0] arc[x radius = 0.7,y radius=1.4,start angle=360,end angle=270];
\draw[Lavender,thick,dashed] (0,-1.9) arc[radius=1.2,start angle=270,end angle=90] arc[radius=0.6,start angle=90,end angle=-90] arc[radius=0.6,start angle=90,end angle=270];
\node at (0,-0.35) {0};
\node at (2,-0.35) {1};
\node at (-2,-0.35) {-1};
\node[anchor=west] at (0,2) {$\infty$};
\node at (1,1) {$\pmb{\times}$};
\node at (-1,1) {$\pmb{\times}$};
\node at (0,-1.5) {$\pmb{\times}$};
\foreach \Point in {(2,0),(-2,0),(0,0),(0,2)}\node at \Point {\textbullet};
\end{tikzpicture}
\caption[$\widetilde{\alpha}-\alpha$]{$\widetilde{\alpha}-\alpha$
\label{fig:talpha}}
\end{figure}

Similarly, we can follow exact the same procedure for $\beta$. We get:
\begin{equation}
    \begin{split}
        & M_0(B_1) = -2A_1 +6A_2 -3B_1 + 2B_2 -4\widetilde{C_{-1}} + 4\widetilde{C_0}\\
        & M_0(B_2) = -2A_1 +2A_2 -2B_1 + B_2 + 4\widetilde{C_0}
    \end{split}
\end{equation}

Now, as figure \ref{fig:tc0} suggest, the case for each $\gamma_i$ is more subtle. Contrary to the $\alpha,\beta$ cases, the 2-cyle $\widetilde{C_0}$, for example, is formed by continuing a 1-cycle that involves both $\eta_1,\eta_2$, therefore we can't ignore any of the points $-1,0,1$ in computing the monodromy. At this point though, we can use the computer and impose the condition that $M_0$ has to preserve $Q$ and solve the system of equations. The result is the following:
\begin{equation}
    \begin{split}
    & M_0(\widetilde{C_0}) = -A_1 +3A_2 -3B_1 + B_2 -2\widetilde{C_{-1}} + \widetilde{C_0} + 2\widetilde{C_1}\\
    & M_0(\widetilde{C_{-1}}) = 2A_1 -4A_2 +6B_1 -2 B_2 +\widetilde{C_{-1}} - 4\widetilde{C_0} - 4\widetilde{C_1}\\
        & M_0(\widetilde{C_1}) = -2A_1 +6A_2 -4B_1 + 2B_2 -4\widetilde{C_{-1}} + 4\widetilde{C_0} + \widetilde{C_1}
    \end{split}
\end{equation}

Now we can write our full monodromy $M_0$:
\begin{equation}
    M_0 = \begin{bmatrix}
            1&2&-2&-2&2&-1&-2\\
            -2&-3&6&2&-4&3&6\\
            2&6&-3&-2&6&-3&-4\\
            -2&-2&2&1&-2&1&2\\
            0&0&-4&0&1&-2&-4\\
            -4&-4&4&4&-4&1&4\\
            0&-4&0&0&-4&2&1
            \end{bmatrix}
\end{equation}

Since we can rearrange the loops around $-1,0,1,\infty$ so that their product is the identity, we naturally get the expression for $M_\infty$ as the inverse of the prodcut $M_{-}\cdotp M_0 \cdotp M_+$, leading to:

\begin{equation}
    M_\infty = \begin{bmatrix}
            0&-4&1&0&-4&2&2\\
            4&0&4&1&-2&2&4\\
            -1&4&-3&-2&6&-3&-4\\
            0&-1&2&1&-2&1&2\\
            -4&0&-4&0&1&-2&-4\\
            0&0&4&4&-4&1&4\\
            0&-4&0&0&-4&2&1
            \end{bmatrix}
\end{equation}
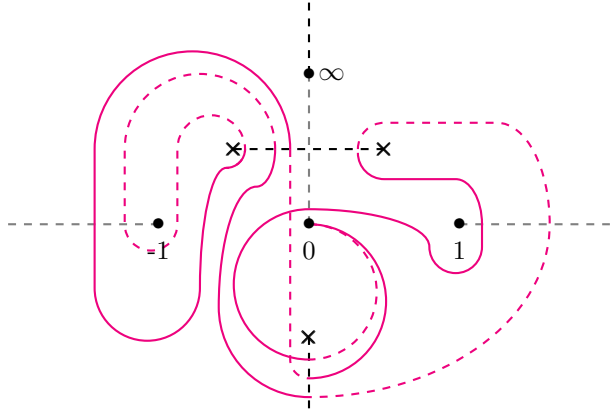
\begin{figure}
\centering
\begin{tikzpicture}
\draw[gray,thick,dashed] (2,0) -- (4,0) ;
\draw[gray,thick,dashed] (0,0) -- (0,2);
\draw[gray,thick,dashed] (-4,0) -- (-2,0);
\draw[black,thick,dashed] (0,2) -- (0,3);
\draw[black,thick,dashed] (0,-1.5) -- (0,-2.5);
\draw[black,thick,dashed] (-1,1) -- (1,1);
\draw[RubineRed,thick,dashed] (0,0) arc[radius = 0.9,start angle=90,end angle=-90];
\draw[RubineRed,thick] (0,-1.8) arc[radius = 1,start angle=270,end angle=90] arc[x radius =1.6,y radius = 0.5,start angle=90,end angle=0] arc[radius = 0.35,start angle=180,end angle=360] -- (2.3,0) arc[x radius =0.35,y radius = 0.6,start angle=0,end angle=90] -- (1,0.6) arc[radius = 0.35,start angle=270,end angle=180];
\draw[RubineRed,thick,dashed] (0.65,1) arc[radius = 0.35,start angle=180,end angle=90] -- (2.5,1.35) arc[x radius =0.7,y radius = 1.35,start angle=90,end angle=0] arc[x radius =3.2,y radius = 2.3,start angle=360,end angle=270];
\draw[RubineRed,thick] (0,-2.3) arc[radius = 1.2,start angle=270,end angle=180] arc[x radius = 0.5,y radius = 1.6,start angle=180,end angle=90] arc[x radius = 0.25,y radius = 0.5,start angle=270,end angle=360];
\draw[RubineRed,thick,dashed] (-0.45,1) arc[radius = 1,start angle=0,end angle=180] -- (-2.45,0) arc[radius = 0.35,start angle=180,end angle=360] -- (-1.75,1) arc[radius = 0.45,start angle=180,end angle=0] arc[radius = 0.25,start angle=360,end angle=270];
\draw[RubineRed,thick] (-0.85,1) arc[radius = 0.25,start angle=360,end angle=270] arc[y radius = 1.6,x radius = 0.35,start angle=90,end angle=180] arc[radius = 0.7,start angle=360,end angle=180] -- (-2.85,1) arc[radius = 1.3,start angle=180,end angle=0];
\draw[RubineRed,thick,dashed] (-0.25,1) -- (-0.25,-1.8) arc[radius = 0.25,start angle=180,end angle=270];
\draw[RubineRed,thick] (0,-2.05) arc[radius = 1.025,start angle=270,end angle=450];
\node at (0,-0.35) {0};
\node at (2,-0.35) {1};
\node at (-2,-0.35) {-1};
\node[anchor=west] at (0,2) {$\infty$};
\node at (1,1) {$\pmb{\times}$};
\node at (-1,1) {$\pmb{\times}$};
\node at (0,-1.5) {$\pmb{\times}$};
\foreach \Point in {(2,0),(-2,0),(0,0),(0,2)}\node at \Point {\textbullet};
\end{tikzpicture}
\caption[The deformed $\gamma_0$ after monodromy]{$\widetilde{\gamma_0}$, the resulting cycle after monodromy around $t=0$.
\label{fig:tc0}}
\end{figure}
\section{The geometric monodromy group}

Recall that by the Monodromy theorem\ref{mon_theorem}, all the monodromies are quasi-unipotent. Hence, all of them have a well-defined logarithm, which we will denote by $N_i:= log(M_i)$, see chapter 1 for a brief review of this topic. 

A quick computation shows that $M_0$ is in fact semi-simple, so the unipotent part $(M_0)_{un}$ is the identity and hence $N_0=0$. The remaining monodromies do have non trivial logarithms: $M_+,M_{-}$ are actually unipotent and $M_{\infty}$ is the only non-unipotent. We can easily check that $M_{\infty}^3$ is unipotent though. 

If $M_\infty = M_s\cdotp M_u$ is the Jordan-Chevalley decomposition and $I$ is the 7x7 identity matrix, then:
\begin{equation}
\begin{split}
    & N_+ = M_+ - I\\
    & N_{-} = M_{-} - I\\
    & N_\infty := log(M_u) = \frac{1}{3}log(M_\infty^3)
\end{split}
\end{equation}

We have the following result concerning the monodromy group of the family $X_t$:
\begin{theorem}\label{g2_main}
The log-monodromies $N_{+},N_{-},N_\infty$ generate $\mathfrak{g}_2$.
\end{theorem}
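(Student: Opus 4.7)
The plan is to exhibit a common invariant alternating $3$-form for the three log-monodromies and then perform a dimension count. Since the monodromies $M_+, M_-, M_\infty$ preserve the intersection form $Q$ on $V$, their logarithms satisfy $N_i^{T} Q + Q N_i = 0$, so $N_+, N_-, N_\infty \in \mathfrak{so}(Q) \subset \mathfrak{gl}(V)$, a $21$-dimensional complex Lie algebra containing $\mathfrak{g}_2$ as a distinguished $14$-dimensional subalgebra. I would verify this initial containment first as a sanity check directly from the explicit matrices for $N_+, N_-, N_\infty$.

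Next I would invoke the Cartan--Bryant characterization: $\mathfrak{g}_2$ is the annihilator in $\mathfrak{gl}(V)$ of a generic alternating $3$-form $\phi \in \Lambda^{3} V^{*}$. The induced action is $(X \cdot \phi)(v_1, v_2, v_3) = -\phi(X v_1, v_2, v_3) - \phi(v_1, X v_2, v_3) - \phi(v_1, v_2, X v_3)$, and I would solve the linear system $N_+ \cdot \phi = N_- \cdot \phi = N_\infty \cdot \phi = 0$. This is $35$ unknowns subject to $3 \times 35 = 105$ linear equations, so one expects a small-dimensional solution space, and in particular a nonzero $\phi$. The crucial check is that the $\phi$ one finds lies in the open $GL(V)$-orbit, which can be verified by computing Bryant's associated symmetric pairing $B_\phi(v, w) := \iota_v \phi \wedge \iota_w \phi \wedge \phi \in \Lambda^{7} V^{*}$ and showing it is non-degenerate. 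Once $\phi$ is in the open orbit, its stabilizer in $\mathfrak{gl}(V)$ is the copy of $\mathfrak{g}_2$ that contains our three $N_i$, giving the containment $\langle N_+, N_-, N_\infty \rangle_{\mathrm{Lie}} \subseteq \mathfrak{g}_2$.

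The final step is a dimension count: iterate the bracket, setting $V_0 := \mathrm{span}(N_+, N_-, N_\infty)$ and $V_{k+1} := V_k + [V_0, V_k]$ until stabilization, and verify that the resulting Lie subalgebra $\mathfrak{h}$ has dimension exactly $14$. This is pure linear algebra with explicit $7 \times 7$ matrices. Combined with $\mathfrak{h} \subseteq \mathfrak{g}_2$ from the previous step, this forces $\mathfrak{h} = \mathfrak{g}_2$.

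The main obstacle I anticipate is the $3$-form step: not merely exhibiting a simultaneous invariant $\phi$, but ruling out the possibility that every common invariant is degenerate (which would only yield containment in a larger, less useful stabilizer) and confirming the bracket closure achieves all $14$ dimensions rather than landing in a proper subalgebra such as $\mathfrak{sl}_3 \subset \mathfrak{g}_2$. Both checks are concrete linear-algebra computations with $7 \times 7$ matrices, but they are what genuinely distinguish $\mathfrak{g}_2$ from its nearest competitors inside $\mathfrak{so}(Q)$.
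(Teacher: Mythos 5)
Your proposal is correct but takes a genuinely different route from the paper. The paper generates a long list of iterated brackets $Y_1,\dots,Y_{14}$, shows that together with $N_-,N_+$ they span a 14-dimensional space, and then identifies the algebra directly by choosing a commuting, diagonalizable pair $t_1 = Y_1$, $t_2 = [Y_4,Y_5]$ and checking that the $\mathrm{ad}(t_i)$-eigenvalue pattern on the resulting 14-dimensional algebra reproduces the $\mathfrak{g}_2$ root system (one-dimensional root spaces with multiplicities $1,4,4,4,1$). You instead establish the a priori upper bound $\langle N_+,N_-,N_\infty\rangle_{\mathrm{Lie}} \subseteq \mathfrak{g}_2$ by producing a common invariant generic alternating 3-form $\phi$, using the Cartan--Bryant characterization of $\mathfrak{g}_2$ as a stabilizer inside $\mathfrak{gl}_7$ and certifying genericity via non-degeneracy of the associated pairing $B_\phi$; you then only need the bracket closure to reach dimension $14$. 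Both approaches reduce to explicit $7\times 7$ linear algebra, but they distribute the burden differently: the paper earns the identification ``this 14-dimensional algebra is $\mathfrak{g}_2$'' by exhibiting a Cartan subalgebra and matching the root diagram, whereas you earn it from the containment plus a dimension count, avoiding the need to verify simultaneous diagonalizability of $t_1,t_2$ and to sort eigenvalues into a $G_2$ root pattern. Your approach has the minor advantage of giving the $G_2$-invariant $3$-form explicitly, which is geometrically meaningful in its own right; the paper's has the advantage of directly producing the root-space decomposition. You also correctly flag the two delicate points in your method, namely that a nonzero invariant $\phi$ must actually lie in the open $GL(7)$-orbit (a degenerate invariant $3$-form would only give containment in a larger stabilizer), and that the bracket closure must genuinely reach all $14$ dimensions rather than stall at a proper subalgebra such as $\mathfrak{sl}_3 \subset \mathfrak{g}_2$; both are checkable by the same kind of finite computation the paper relies on.
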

\begin{proof}
Consider the elements:
\begin{equation}
\begin{aligned}
     & Y_1 = \left[N_{-},N_+\right]  & Y_8 &= \left[Y_5,Y_6\right]\\
     & Y_2 = \left[N_{-},N_\infty\right]  & Y_9 &= \left[N_\infty,Y_5\right]\\
     & Y_3 = \left[N_+,N_\infty\right]  & Y_{10} &= \left[N_\infty,Y_9\right]\\
     & Y_4 = \left[Y_1,Y_2\right] & Y_{11} &= \left[N_\infty,Y_{10}\right]\\
     & Y_5 = \left[Y_1,Y_3\right] & Y_{12} &= \left[N_+,Y_{11}\right]\\
     & Y_6 = \left[Y_2,Y_3\right] & Y_{13} &= \left[N_\infty,Y_{12}\right]\\
     & Y_7 = \left[Y_2,Y_6\right] & Y_{14} &= \left[N_{-},Y_{13}\right]
\end{aligned}
\end{equation}
A quick computation leads us to:
\begin{lemma}
The elements $N_{-},N_+,Y_1,Y_4,Y_5,Y_6,Y_7,Y_8,Y_9,Y_{10},Y_{11},Y_{12},Y_{13},Y_{14}$ are linearly independent over $\mathbb{Q}$.
\end{lemma}

Now define $t_1 := Y_1$ and $t_2 := \left[Y_4,Y_5\right]$, a direct computation gives us that $\left[t_1,t_2 \right]=0$, moreover they both are diagonalizable. Let $ad(.)$ denotes the adjoint representation, if we act through $ad(t_i), i=1,2$, on $\mathfrak{g}$, we get 14 linearly independent (in both cases) eigenvectors with 1-dimensional eigenspaces, moreover we have:
\begin{itemize}
    \item 1 with eigenvalue -2
    \item 4 with eigenvalue -1
    \item 4 with eigenvalue 0
    \item 4 with eigenvalue 1
    \item 1 with eigenvalue 2 
\end{itemize} 
Which are in 1-1 correspondence with the roots of $\mathfrak{g}_2$(see figure \ref{fig:rootsg2}), therefore $\mathfrak{h}:= \langle t_1,t_2\rangle$ is a Cartan subalgebra and $\mathfrak{g} = \mathfrak{g}_2$.
\end{proof}
\begin{figure}
\centering
\includegraphics[scale=0.3]{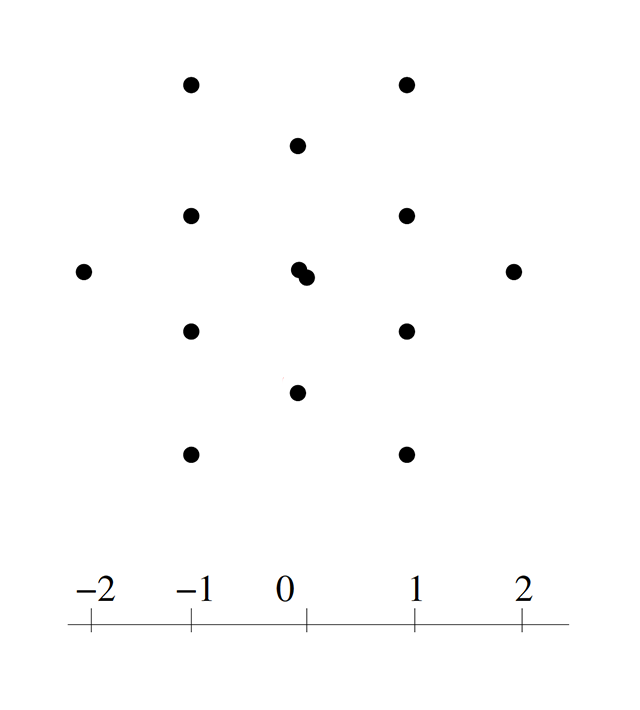}
\caption[Roots of $\mathfrak{g}_2$]{Roots of $\mathfrak{g}_2$\label{fig:rootsg2}}
\end{figure}

This gives us the immediate corollary:
\begin{corollary}
The geometric monodromy group of the family $X_t$ is $G_2$.
\end{corollary}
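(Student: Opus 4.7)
The plan is to promote Theorem~\ref{g2_main} from the Lie algebra level to the algebraic group level. By definition, the geometric monodromy group $G_{geom}$ is the Zariski closure in $\mathrm{GL}(V_\mathbb{Q})$ of the image of the monodromy representation $\pi_1(S,t_0)\to \mathrm{GL}(V_\mathbb{Q})$, where $V = H^2_{tr}(X_{t_0})$ and $S$ is the punctured $t$-line; since monodromy preserves the intersection form $Q$, one has $G_{geom}\subseteq \mathrm{O}(V,Q)$.

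First I would establish $G_2\subseteq (G_{geom})^\circ$. Each log-monodromy $N_\bullet$ lies in $\mathrm{Lie}(G_{geom})$: for the unipotent matrices $M_\pm$ this is immediate, since $\exp(tN_\pm)\in G_{geom}$ for all $t\in\mathbb{Z}$ and the Zariski closure of this set is the algebraic one-parameter subgroup $\exp(\mathbb{G}_a N_\pm)$; for $N_\infty$ the same reasoning applies to $M_\infty^3$, which is unipotent. Theorem~\ref{g2_main} then asserts that $N_+,N_-,N_\infty$ generate $\mathfrak{g}_2$ as a Lie algebra, so $\mathfrak{g}_2\subseteq \mathrm{Lie}(G_{geom})$, and exponentiating yields $G_2\subseteq (G_{geom})^\circ$.

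For the matching upper bound I would use that $\mathfrak{g}_2$ is a maximal proper Lie subalgebra of $\mathfrak{so}(7)$, so $\mathrm{Lie}(G_{geom})$ must equal either $\mathfrak{g}_2$ or the whole of $\mathfrak{so}(V,Q)$. Ruling out the latter is the main obstacle. I would do it by exhibiting a $G_2$-invariant alternating $3$-form $\omega \in \wedge^3 V^*$ (the cross product coming from the inclusion $G_2\subseteq \mathrm{SO}(7)$) and verifying, using the explicit matrices $M_\bullet$ computed in the previous section, that each monodromy preserves $\omega$; this forces $G_{geom}\subseteq \mathrm{Stab}(\omega)\cap \mathrm{O}(Q) = G_2$. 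An equivalent computational route is to verify that the Lie subalgebra of $\mathfrak{so}(V,Q)$ generated by the $N_\bullet$, closed under brackets, has dimension exactly $14$, leaving no room for enlargement.

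Finally, for the component group: $M_\pm$ are unipotent and so sit in $(G_{geom})^\circ = G_2$; $M_0$ is semisimple of finite order and $M_\infty^3$ is unipotent, so $M_0$ and $M_\infty$ can only contribute finitely many cosets. A direct matrix check that both stabilize the same $3$-form $\omega$ places them inside $G_2$, forcing the component group to be trivial and hence $G_{geom} = G_2$.
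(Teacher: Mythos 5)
Your proposal correctly identifies what the paper leaves implicit when it says the corollary is ``immediate'' from Theorem~\ref{g2_main}. The paper establishes only that the Lie algebra $\mathfrak{g}$ generated by $N_+,N_-,N_\infty$ is isomorphic to $\mathfrak{g}_2$; from this one gets $\mathfrak{g}_2\subseteq\mathrm{Lie}(G_{geom})$, hence $G_2\subseteq (G_{geom})^\circ$, but neither the reverse inclusion nor the triviality of the component group is automatic. You are right that these need an argument, and the $3$-form route (exhibit a nonzero $\omega\in\bigwedge^3 V^*$ annihilated by $\mathfrak{g}_2$ and check by hand that all four matrices $M_\pm,M_0,M_\infty$ fix it) is a clean way to close both gaps at once: it yields $G_{geom}\subseteq\mathrm{Stab}(\omega)\cap\mathrm{SO}(Q)=G_2$, so the subsequent ``component group'' step in your write-up is actually redundant.

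One part of your proposal is not correct as stated, however. You offer as an ``equivalent computational route'' for the upper bound the verification that the Lie subalgebra of $\mathfrak{so}(V,Q)$ generated by $N_+,N_-,N_\infty$ closes up at dimension $14$. That computation is exactly what Theorem~\ref{g2_main} already performs, and it does \emph{not} by itself bound $\mathrm{Lie}(G_{geom})$ from above. The Lie algebra of $G_{geom}$ must also be stable under $\mathrm{Ad}(M_0)$ (and under $\mathrm{Ad}$ of the semisimple part of $M_\infty$), and if $M_0$ failed to normalize $\mathfrak{g}_2$ then $\mathrm{Lie}(G_{geom})$ would contain $\mathfrak{g}_2 + \mathrm{Ad}(M_0)\mathfrak{g}_2$, which by the maximality of $\mathfrak{g}_2$ in $\mathfrak{so}(4,3)$ would force $\mathrm{Lie}(G_{geom})=\mathfrak{so}(4,3)$. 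So the dimension count on the bracket-generated algebra is a restatement of the lower bound, not an alternative proof of the upper bound; you genuinely need the $3$-form (or a direct check that $M_0\in G_2$, or that $\mathrm{Ad}(M_0)$ preserves the $14$-dimensional algebra) to finish. With that route taken, the argument is complete and more careful than what the paper records.
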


\begin{remark}
The asymptotics of the period map for this family is given by the limiting mixed Hodge structure(LMHS) at each singularity of the local system.They are easily computed using the expression of the monodromies obtained above, and one hopes that using that fact, one can prove the generic global Torelli theorem for some quotient of this family.
\end{remark}

\section{The fourth family}
Now we analyze the family $X_t$ given by the fourth family in \ref{families}. It is a family of elliptic surfaces obtained by base changing the elliptic surface $$y^2=4x^3+9z^2x^2 + 6zx +1\; (z\in\mathbb{P}^1)$$ by $w^2 = t(z^3-1) + t^2$ $(t\neq 0,1,\infty)$. Repeating the same argument as we did before, now we have 6 fibers of type $I_1$ and one of type $I_{18}$(at $\infty$). Moreover, the local monodromies around $1,r=\frac{-1+\sqrt{3}i}{2}$ and $\bar{r}$ are obtained using the Picard-Lefschetz formula (\ref{plformula}) again, since the degeneration is nodal. We have:

\[ T_{1}=\left( \begin{array}{cc}
1 & 1 \\
0 & 1
\end{array}\right)
, T_r= \left( \begin{array}{cc}
1 & 0 \\
-1 & 1
\end{array} \right)
, T_{\bar{r}}= \left( \begin{array}{cc}
2 & 1 \\
-1 & 0
\end{array} \right)
\]

Let $\delta,\eta$ be the basis for the local system with $\delta\cdot\eta=1$, then the vanishing cycles are $\delta$ at $1$, $\eta$ at $r$ and $\delta - \eta$ at $\bar{r}$. Let $\gamma_i$ the path connecting the 2 points in $E_t$ which are the pre-image of `$i$', and $\alpha,\beta$ the basis for the first homology of the Elliptic curve $E_t$, see Figure \ref{fig:et2}. By using the same idea of the first case, we define the 7 cycles:

\begin{equation}
\begin{aligned}
     & A_1 = \delta \times \alpha  & C_{1} &= \delta \times \gamma_{1}\\
     & A_2 = \eta \times \alpha  & C_{r} &= \eta \times \gamma_{r}\\
     & B_1 = \delta \times \beta   & C_{\bar{r}} &= (\delta - \eta )\times \gamma_{\bar{r}}\\
     & B_2 = \eta \times \beta   & &
\end{aligned}
\end{equation}

Again, the cycles $A_1,A_2,B_1,B_2$ are trivially transcendental but the $C$s are not. Let $E_i,E_i'$ be the two nodal curves over the end of $\gamma_i$, then the cycles $\widetilde{C_i} = C_i + E_i+E_i'-\sigma$, for $i=1,r,\bar{r}$ and $\sigma$ the zero section. The intersection matrix with respect to the basis $(A_1,A_2,B_1,B_2,\widetilde{C_1},\widetilde{C_r},\widetilde{C_{\bar{r}}})$ is:
$$Q = \begin{bmatrix}
            0&0&0&-1&0&0&0\\
            0&0&1&0&0&0&0\\
            0&1&0&0&0&0&0\\
            -1&0&0&0&0&0&0\\
            0&0&0&0&-2&1&-1\\
            0&0&0&0&1&-2&1\\
            0&0&0&0&-1&1&-2
            \end{bmatrix}$$
In particular, $\det Q \neq 0$ again and the cycles above are indeed a basis. The monodromy $M_1$ around $t=1$ is really simple, the crosses $\pmb{\times}$ rotate counter-clockwise around each other until they meet the spot the other was. As the picture \ref{fig:et2} suggests, the $\widetilde{C_i}$ don't move, so we can ignore them. A quick computation using \cite{sage} leads us to:
$$\alpha \rightarrow \alpha-\beta,\beta \rightarrow \alpha$$
As $t\to 1$, the branch cuts of $E_t$ don't cross the cuts trivializing the $\mathbb{P}^1$ local system, hence the cycles $\{\delta,\eta\}$ don't change and our matrix becomes:
$$M_1 = \begin{bmatrix}
            1&0&1&0&0&0&0\\
            0&1&0&1&0&0&0\\
            -1&0&0&0&0&0&0\\
            0&-1&0&0&0&0&0\\
            0&0&0&0&1&0&0\\
            0&0&0&0&0&1&0\\
            0&0&0&0&0&0&1
            \end{bmatrix}\\$$
The Jordan Block of this matrix is:
$$\begin{bmatrix}
            1&0&0&0&0&0&0\\
            0&1&0&0&0&0&0\\
            0&0&1&0&0&0&0\\
            0&0&0&\zeta_6&0&0&0\\
            0&0&0&0&\zeta_6&0&0\\
            0&0&0&0&0&\bar{\zeta_6}&0\\
            0&0&0&0&0&0&\bar{\zeta_6}
            \end{bmatrix}\\$$
where $\zeta_6$ is the 6-th root of unity. Our result matches Katz's description \cite{KATZ} of the Jordan  block structure of the monodromy around 1.
\begin{figure}
\centering
\begin{tikzpicture}
\draw[gray,thick,dashed] (2,0) -- (4,0) ;
\draw[gray,thick,dashed] (-4,1) -- (-2,1);
\draw[gray,thick,dashed] (-4,-1) -- (-2,-1);
\draw[black,thick] (-1,1) -- (-1,-1);
\draw[black,thick] (1,0) -- (1,2);
\draw[Goldenrod,thick] (-.65,1) arc[radius = .35,start angle=0,end angle=180] -- (-1.35,-1) arc[radius = .35,start angle=180,end angle=360] -- (-.65,1);
\draw[Salmon,thick] (1,.35) arc[radius = .35,start angle=90,end angle=-90] -- (-1,.65);
\draw[Salmon,thick,dashed] (-1,.65) arc[radius = .35,start angle=270,end angle=90] -- (1,.35);
\draw[Mulberry,thick] (-2,1) -- (-1.7,-1) arc[y radius = 0.8,x radius = 1.6,start angle=180,end angle=360] -- (1.5,1) arc[radius = .5,start angle=0,end angle=90];
\draw[Mulberry,thick,dashed] (1,1.5) -- (-1.5,1.7) -- (-2,1);
\draw[ProcessBlue,thick] (-2,-1) arc[y radius = 1.2,x radius = 1.9,start angle=180,end angle=360] -- (1.5,1) arc[x radius = .5,y radius = .35,start angle=0,end angle=90];
\draw[ProcessBlue,thick,dashed] (1,1.35) -- (-1.5,1.55) -- (-2,-1);
\draw[PineGreen,thick] (2,0) -- (1.5,1) arc[y radius = .25,x radius = .5,start angle=0,end angle=90];
\draw[PineGreen,thick,dashed] (1,1.25) -- (-1.4,1.45) -- (-1.5,-1) arc[radius =1.8,start angle=180,end angle=360] -- (2,0);
\node at (2,-0.35) {1};
\node at (-2,.65) {$r$};
\node at (-2,-1.35) {$\bar{r}$};
\node at (-1,1) {$\pmb{\times}$};
\node at (-1,-1) {$\pmb{\times}$};
\node at (1,0) {$\pmb{\times}$};
\node at (1.35,2.35) {$\infty$};
\foreach \Point in {(2,0),(-2,1),(-2,-1),(1,2)}\node at \Point {\textbullet};
\end{tikzpicture}
\caption[1-cycles over the Base $E_t$]{The 1-cycles \textcolor{Goldenrod}{$\alpha$},\textcolor{Salmon}{$\beta$},\textcolor{Mulberry}{$\gamma_{r}$},\textcolor{ProcessBlue}{$\gamma_{\bar{r}}$} and \textcolor{PineGreen}{$\gamma_1$} over the Elliptic curve $E_t$
\label{fig:et2}}
\end{figure}
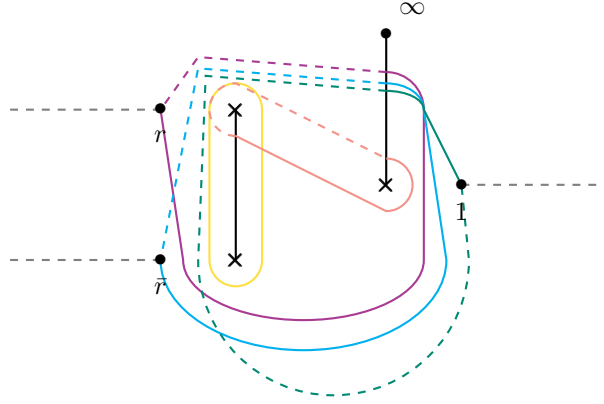

On the other hand, when $t\to 0$ the situation is really complicated. As before, the branch cuts cross the cuts of the
local system, the $\pmb{\times}$ in Figure \ref{fig:et2} encircle $r,\bar{r},1$ counter-clockwise once. 

According to Katz \cite{KATZ}, the Jordan Block of $M_0$ should be
$$\begin{bmatrix}
            -1&0&0&0&0&0&0\\
            0&-1&0&0&0&0&0\\
            0&0&-1&0&0&0&0\\
            0&0&0&-1&0&0&0\\
            0&0&0&0&1&0&0\\
            0&0&0&0&0&1&0\\
            0&0&0&0&0&0&1
            \end{bmatrix}\\$$

Note, in particular, that $M_0$ has to be of finite order.

We couldn't find this monodromy, but by looking at the representation description given by Katz, we conjecture that the monodromy group is reducible and is either $SL(2)\times SL(2)\times SL(2)$ or $SL(2)\times SL(2)\times U(1)$.
\bibliographystyle{amsplain}

\end{document}